\newtheorem{thm}{\textbf{Theorem}}[section]
\newtheorem{Def}[thm]{\textbf{Definition}}
\newtheorem{prop}[thm]{\textbf{Proposition}}
\newtheorem{lem}[thm]{\textbf{Lemma}}
\newtheorem{rem}[thm]{Remark}
\def\Q{\mathbb{Q}}
\def\Z{\mathbb{Z}}
\def\C{\mathbb{C}}
\def\A{\mathbb{A}}
\def\hom{\operatorname{Hom}}
\def\Irr{\operatorname{Irr}}
\def\GL{\operatorname{GL}}
\def\tr{\operatorname{tr}}
\title[$p$-adic functoriality]{\textbf{\textsc{ \MakeLowercase{$p$}-adic functoriality for inner forms of unitary groups in three variables}}}
\author{\textsc{Judith Ludwig}}
\date{}
\begin{document}
\maketitle

\begin{abstract} We prove $p$-adic functoriality for inner forms of unitary groups in three variables by establishing the existence of morphisms between eigenvarieties that extend the classical Langlands functoriality.  
\end{abstract}

\section{Introduction}

Eigenvarieties are rigid analytic spaces that $p$-adically interpolate systems of Hecke eigenvalues attached to automorphic representations. For definite unitary groups they have been constructed by Chenevier \cite{Chenevier} using Buzzard's and Coleman's approach \cite{kevin} and by Emerton \cite{emerton} using completed cohomology. Given a construction of eigenvarieties for a class of groups a question that naturally arises is whether one can extend Langlands functoriality $p$-adically. The aim is to construct a morphism of eigenvarieties that on classical points agrees with the classical transfer. Chenevier was able to prove \textit{p}-adic functoriality in the case of the Jacquet--Langlands transfer by interpolating the classical transfer \cite{pjl}. Using his approach White \cite{pjw} established a $p$-adic endoscopic transfer for unitary groups, Flicker \cite{flicker} proved a $p$-adic interpolation of base-change and Newton \cite{newton} proved a $p$-adic Jacquet--Langlands correspondence in Emerton's setting. 

In these notes we prove $p$-adic functoriality for certain inner forms $G$ and $G'$ of a quasi-split unitary group $U_3^*$ in three variables.
There are various problems one encounters when trying to interpolate the classical transfer: 
\begin{itemize}
	\item The classical transfer can only be proved for $L$-packets (or $A$-packets) rather than for single automorphic representations and in general the multiplicities of elements in a packet may vary and can even be zero, yet eigenvarieties parameterize systems of eigenvalues of Hecke operators that show up in certain spaces of $p$-adic automorphic forms.   
	\item An eigenvariety comes with a tame level. If we want to construct a $p$-adic functoriality morphism from an eigenvariety for $G$ that has arbitrary tame level to an eigenvariety for $G'$, we need to find a suitable tame level for $G'$.
\end{itemize}
We deal with these problems as follows:

We study ``stable'' situations, i.e., situations where the multiplicities within a $L$-packet are constant. This allows us to transfer automorphic representations rather than packets. In order to find suitable tame levels of $G$ and $G'$ that are respected by the classical transfer, we use the theory of the Bernstein decomposition and properties of the local Jacquet--Langlands transfer. 

Our main theorem is Theorem \ref{main}, where we prove the existence and uniqueness of the $p$-adic functoriality morphism.  We remark that the source can be an eigenvariety for $G$ that has \textit{arbitrary} tame level at the bad primes, i.e., the primes, where $G$ and $G'$ differ. In this aspect, these notes differ from the papers on $p$-adic functoriality cited above, where the open compact subgroup at such primes is usually assumed to be maximal.

\section*{Acknowledgements} I thank Kevin Buzzard for suggesting that I think about this topic. I thank him and Paul-James White for many helpful conversations and their comments on an earlier draft of this paper. I am very grateful to Sug Woo Shin for helpful correspondence and in particular for explaining the arguments needed for the multiplicity statement in Section 2. I thank Christian Johansson for many helpful conversations. Moreover I would to like to thank the anonymous referee for many helpful comments. 

I thank Imperial College for providing a Roth Studentship and EPSRC for their financial support of my doctoral studies.

\vspace{0.3cm}
\textbf{Notation.} All division algebras occurring in this paper will be central simple algebras. 

\section{Classical functoriality}
Let $E/ \Q$ be an imaginary quadratic extension and $U_3^*$ the quasi-split unitary group in three variables attached to $E/ \Q$.
Let $G,G'$ be two definite unitary groups that are inner forms of $U_3^*$. Note that for all places $p$ that are non-split in $E$ the local groups $G_p$ and $G'_p$ are quasi-split and therefore isomorphic. Assume that $G$ comes from a division algebra $D$. We denote by $S$ the set of places $p$ where $G_p \ncong G'_p$ and assume furthermore that $G'_p \cong \GL_3/\Q_p$ for all places $p \in S $. Denote by \textit{JL} the local Jacquet--Langlands transfer (see Th\'eor\`eme principal in \cite{bdkv}, p.34).

We define $S_{G}$ to be the set of all primes $p$ that split in $E$ and such that $D_w$ is ramified for $w|p$ and similarly for $G'$.
We fix inner automorphisms $\psi_G: G_{\overline{\Q}}\rightarrow (U_3^*)_{\overline{\Q}}$ and $\psi_{G'}: G'_{\overline{\Q}}\rightarrow (U_3^*)_{\overline{\Q}}.$
Let $\Pi(G)$ (resp.\ $\Pi(G')$, $\Pi(U_3^*)$) denote the set of global $L$-packets of $G$ (resp. $G'$, $U_3^*$) as defined in \cite{Rog} Section 14.6. (cf.\ Sections 12.2 and 14.4 for the definition of local $L$-packets). We would like to stress that for the transfer to work Rogawski has to enlarge certain local $L$-packets to what he calls $A$-packets (see \cite{Rog} p.\ 199) and by definition the representations in a global $L$-packet vary locally within a local $L$-packet or an $A$-packet.

In Sections 14.4 and 14.6, Rogawski constructs transfer maps $\psi_G: \Pi(G)\rightarrow \Pi(U_3^*)$ and $\psi_{G'}:\Pi(G')\rightarrow \Pi(U_3^*)$. As $G$ comes from a division algebra all $L$-packets of $G$ are stable, i.e., non-endoscopic (cf.\ \cite{Rog} Theorem 14.6.3 and p.\ 201 for the definition of a stable $L$-packet), so using his notation we get $\Pi(G)=\Pi_s(G)$. Furthermore, Rogawski proves the following proposition:
\begin{prop}
The map $\psi_{G}$ defines a bijection between $\Pi_s(G)$ and 
$$Im(\psi_{G}):=\{ \Pi \in \Pi_s(U_3^*) : \dim(\Pi)=1 \text{ or } \Pi_v \in \Pi^2(U_{3,v}^*) \text{ for all } v \in S_{G}\cup \{\infty\} \}.$$
\end{prop}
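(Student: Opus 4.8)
The plan is to unravel Rogawski's construction of $\psi_G$ one place at a time and to read off its image from the bijectivity of the local Jacquet--Langlands correspondence at the places of $S_G$ and from the description of the archimedean transfer for the compact group $G_\infty$.

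First I would recall how $\psi_G$ is assembled. Fix a global $L$-packet $\Pi^G$ of $G$. At a place $v \notin S_G \cup \{\infty\}$ we have $G_v \cong U^*_{3,v}$, and $\psi_G$ merely reinterprets the local packet of $\Pi^G$ at $v$ as a local packet of $U^*_{3,v}$. At $v \in S_G$ the group $G_v$ is the unit group of a degree-$3$ central division algebra over $\Q_v$, and on non-one-dimensional representations $\psi_G$ is the local Jacquet--Langlands transfer $\mathrm{JL}_v$ of \cite{bdkv}, a bijection from $\Irr(G_v)$ onto the set $\Pi^2(U^*_{3,v})$ of square-integrable representations of $\GL_3(\Q_v) = U^*_{3,v}$, carrying the characters of $G_v$ to twists of the Steinberg representation. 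At $v = \infty$ the group $G_\infty$ is compact, and the archimedean transfer sends a finite-dimensional irreducible representation of $G_\infty$ to the discrete series $L$-packet of $U^*_{3,\infty}$ with the corresponding regular infinitesimal character, injectively. On the one-dimensional representations $\psi_G$ is the ``residual'' map $\eta \circ \mathrm{Nrd}_G \mapsto \eta \circ \det$ forced by the global Jacquet--Langlands correspondence, which differs from the local recipe at the places in $S_G \cup \{\infty\}$. That $\psi_G(\Pi^G)$ is an automorphic $L$-packet of $U^*_3$ -- and that it is stable, because $G$ comes from a division algebra and so has no elliptic endoscopy (whence $\Pi(G) = \Pi_s(G)$) -- is Theorem 14.6.3 of \cite{Rog}.

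Granting this, both the inclusion $Im(\psi_G) \subseteq$ (right-hand set) and the injectivity of $\psi_G$ are immediate from the local picture. If $\Pi^G$ is one-dimensional then so is $\psi_G(\Pi^G)$, which lies in the right-hand set through the ``$\dim = 1$'' alternative; if $\Pi^G$ is not one-dimensional then it is non-one-dimensional at some place, hence $\psi_G(\Pi^G)$ is non-one-dimensional, and by the recipe above its local component lies in $\Pi^2(U^*_{3,v})$ at every $v \in S_G \cup \{\infty\}$ -- so $Im(\psi_G)$ is contained in the right-hand set. For injectivity, suppose $\psi_G(\Pi^G_1) = \psi_G(\Pi^G_2) =: \Pi$. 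At $v \notin S_G \cup \{\infty\}$ both packets equal the local packet of $\Pi$; at $v \in S_G$ both equal $\mathrm{JL}_v^{-1}$ of $\Pi_v$ on the non-one-dimensional part (and $\mathrm{JL}_v$ is a bijection, the one-dimensional part being rigid); at $\infty$ they agree by injectivity of the archimedean transfer. Since a global $L$-packet is the restricted tensor product of its local $L$-packets, $\Pi^G_1 = \Pi^G_2$.

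The substance of the proposition is surjectivity onto the right-hand set. Given $\Pi \in \Pi_s(U^*_3)$ with the stated property I would build a candidate preimage: if $\dim \Pi = 1$, take the corresponding one-dimensional representation of $G$; otherwise $\Pi_v \in \Pi^2(U^*_{3,v})$ for every $v \in S_G \cup \{\infty\}$, and I assemble the collection of local packets consisting of $\Pi_v$ for $v \notin S_G \cup \{\infty\}$, of $\mathrm{JL}_v^{-1}(\Pi_v)$ for $v \in S_G$ (defined precisely because $\Pi_v$ is square-integrable), and of the finite-dimensional representation of $G_\infty$ transferring to $\Pi_\infty$ at $\infty$. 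What then has to be shown is that this collection is a \emph{global} $L$-packet of $G$, i.e.\ that each of its members occurs in $L^2(G(\Q) \backslash G(\A))$; this is the output of Rogawski's comparison of the stable trace formulas of $G$ and $U^*_3$ together with the stable multiplicity formula. I expect this last point -- that the local square-integrability conditions at $S_G \cup \{\infty\}$ are \emph{sufficient}, not just necessary, for a stable packet of $U^*_3$ to descend to $G$, and that no member of the descended packet is annihilated by the multiplicity formula -- to be the main obstacle; it is also the reason the one-dimensional representations have to be excised and treated by hand, as the ``$\dim \Pi = 1$'' clause in the statement records.
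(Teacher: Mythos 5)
The paper's ``proof'' is a one-line citation to Proposition~14.6.2 of Rogawski's book; your sketch is essentially an unpacking of what lies behind that citation, and it correctly locates the content. You identify the easy directions (injectivity and the inclusion of $Im(\psi_G)$ in the displayed set follow from the local description of the transfer at $S_G\cup\{\infty\}$) and you correctly flag that surjectivity --- the claim that a stable packet of $U_3^*$ satisfying the local square-integrability conditions actually descends to an automorphic packet of $G$, with nonzero multiplicity --- is the nontrivial step, resting on Rogawski's stable trace formula comparison and multiplicity formula. Since you ultimately defer to Rogawski for that step just as the paper does, this is the same approach, just written out in more detail; nothing is wrong, and no gap needs filling beyond what you already acknowledge is being outsourced.
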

\begin{proof} This is Proposition 14.6.2 in \cite{Rog}. 
\end{proof}
\begin{lem}
Let $H$ be an inner form of $U_3^*$. Assume $\Pi \in \Pi_s(H)$ is a stable $L$-packet. Then 
$$m(\pi)=1 \ \text{ for all } \pi \in \Pi.$$
\end{lem}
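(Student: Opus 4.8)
The plan is to reduce the statement to Rogawski's multiplicity formula for the discrete automorphic spectrum of $U_3^*$ and its inner forms; the key point is that stability of the packet forces every local sign appearing in that formula to be trivial. Recall that to a global packet $\Pi = \bigotimes'_v \Pi_v$ Rogawski attaches an $L$- or $A$-parameter, a finite abelian $2$-group $\mathfrak{S}_\Pi$ (the relevant component group), and, for each place $v$, a pairing between $\mathfrak{S}_\Pi$ and $\Pi_v$; the multiplicity of $\pi = \bigotimes'_v \pi_v \in \Pi$ in $L^2_{\mathrm{disc}}$ is then given by a formula of the shape
\[ m(\pi) = \frac{1}{|\mathfrak{S}_\Pi|} \sum_{s \in \mathfrak{S}_\Pi} \prod_v \langle s, \pi_v \rangle , \]
in which almost all factors equal $1$. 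For $U_3^*$ the only elliptic endoscopic group other than $U_3^*$ itself is $U(2) \times U(1)$, and the endoscopic packets are precisely those with $|\mathfrak{S}_\Pi| = 2$; it is exactly the non-triviality of $\mathfrak{S}_\Pi$ that lets members of such a packet occur with multiplicity $0$. By definition (cf.\ \cite{Rog}, p.~201 and Theorem~14.6.3), $\Pi \in \Pi_s(H)$ is non-endoscopic, so $\mathfrak{S}_\Pi$ is trivial and the formula collapses to $m(\pi) = 1$ for every $\pi \in \Pi$.

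It remains to make this precise for an arbitrary inner form $H$. Since several of Rogawski's statements are phrased for the quasi-split group $U_3^*$, one has to appeal to Chapter~14 of \cite{Rog} and, underneath it, to the stabilization of the trace formula of $H$: being an inner form of $U_3^*$, $H$ has the same elliptic endoscopic data, and the $U(2) \times U(1)$-term accounts exactly for the non-stable packets, hence does not intervene here. Structurally, this reflects the fact that a stable packet $\Pi$ of $H$ base-changes --- via Rogawski's stable base change, composed with the local Jacquet--Langlands transfer at the split primes where $H$ is ramified --- to a conjugate self-dual automorphic representation $\Pi_E$ of $\GL_3$ over $E$ which, by stability, is cuspidal when $\Pi$ is tempered (the non-tempered stable $A$-packets, among them the packet attached to a one-dimensional representation, form a short explicit list and are treated by hand). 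Strong multiplicity one on $\GL_3/E$ then determines $\Pi_E$, and Rogawski's description of the fibre of stable base change over $\Pi_E$ exhibits it as the single packet $\Pi$, with every member occurring exactly once. In our situation $H$ is definite, so $L^2_{\mathrm{disc}} = L^2$ and the simple trace formula suffices, which lightens all of this considerably.

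I expect the main obstacle to be the careful bookkeeping --- at the archimedean place and between $L$-packets and $A$-packets --- required to guarantee that the stable part of the automorphic spectrum of $H$ is indeed accounted for by stable base change alone with multiplicity one, and that nothing is hidden in the non-tempered stable $A$-packets; for a one-dimensional $\pi$ the multiplicity is of course $1$, and for every other stable packet the vanishing of $\mathfrak{S}_\Pi$ turns the displayed formula into the bare identity $m(\pi) = 1$, so once Rogawski's machinery is set up for $H$ the lemma follows.
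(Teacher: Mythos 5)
Your conclusion is right and you have identified the correct conceptual point --- stability should kill the sign ambiguity --- but your route differs from the paper's and, as written, has soft spots. The paper does not invoke an Arthur-style multiplicity formula as a black box, nor does it appeal to stable base change to $\GL_3/E$ and strong multiplicity one. Instead it gives a short, self-contained derivation from two identities Rogawski proves directly for inner forms $H$: first, the spectral identity (14.6.2),
\[
\sum_{\pi \in \Pi} m(\pi)\,\tr\bigl(\pi(f)\bigr) = \tr\bigl(\psi_H(\Pi)(f^*)\bigr),
\]
and second, the defining character identity of the transfer $\psi_H$ from section 14.4, $\tr\bigl(\Pi(f)\bigr) = \tr\bigl(\psi_H(\Pi)(f^*)\bigr)$, where $\tr\bigl(\Pi(f)\bigr) = \sum_{\pi \in \Pi} \langle 1, \pi \rangle \tr\bigl(\pi(f)\bigr)$. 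Comparing the two and using linear independence of characters, $\langle 1, \pi\rangle \in \{\pm 1\}$, and $m(\pi) \geq 0$ gives $m(\pi)=1$ at once. This is precisely the ``multiplicity formula in the stable case'' you want, but derived in two lines rather than cited; it sidesteps the bookkeeping you flag as a worry (archimedean place, $L$- versus $A$-packets, one-dimensional representations, descent of the formula from $U_3^*$ to $H$), because identities (14.6.2) and the character identity are already stated by Rogawski at the level of the inner form and for all stable packets uniformly. Your base-change/strong-multiplicity-one paragraph would, if fully fleshed out, reprove a special case of Rogawski's Chapter 13--14 results rather than use them, so it is more work for the same output. In short: right idea, different and heavier machinery; the paper's trace-identity comparison is the efficient path.
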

\begin{proof} By \cite{Rog} Equation (14.6.2) and the proof of Proposition 14.6.2, we get the following identity of distributions:
$$ \sum_{\pi \in \Pi}{m(\pi) \tr(\pi(f))}= \tr(\psi_H(\Pi)(f^*)),$$
where $f^*$ denotes the transfer of the function $f$ (see \cite{Rog} Section 14.2).
Using the fact that the transfer $\psi_H$ satisfies the character identity $\tr(\Pi(f))= \tr(\psi_H(\Pi)(f^*))$ (see Section 14.4) we get
$$ \sum_{\pi \in \Pi}{m(\pi) \tr(\pi(f))} = \tr(\Pi(f))= \sum_{\pi \in \Pi}{\left\langle 1,\pi \right\rangle \tr(\pi(f))}.$$
But $\left\langle 1,\pi \right\rangle \in \{\pm 1 \}$ and $m(\pi)\geq 0$, so we deduce that 
$$m(\pi)=1 \ \text{ for all } \pi \in \Pi .$$\end{proof}

\begin{thm}\label{transfer}
Let $\pi=\otimes \pi_p$ be an automorphic representation of $G$ which is not one-dimensional. Then 
$$\pi'\cong \bigotimes_{p\in S} JL(\pi_p) \otimes \bigotimes_{p\notin S} \pi_p$$
is an automorphic representation of $G'$ occurring with the same multiplicity as $\pi$. 
\end{thm}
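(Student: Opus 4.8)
The plan is to deduce this from the two preliminary results (the Proposition and the Lemma) together with the compatibility of Rogawski's global transfer maps $\psi_G$ and $\psi_{G'}$ with the local Jacquet–Langlands transfer at the places of $S$, and with the identity at all other places. First I would set $\Pi$ to be the global $L$-packet of $G$ containing $\pi$; since $G$ comes from a division algebra, $\Pi \in \Pi_s(G) = \Pi(G)$, so $\psi_G(\Pi)$ is a well-defined stable $L$-packet of $U_3^*$. Because $\pi$ is not one-dimensional, neither is $\psi_G(\Pi)$ (transfer preserves the property of being one-dimensional, as one sees from the character identity), so $\psi_G(\Pi) \in \mathrm{Im}(\psi_G)$ means its local components at $v \in S_G \cup \{\infty\}$ are discrete series. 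I would then need to check that $\psi_G(\Pi)$ also lies in $\mathrm{Im}(\psi_{G'})$, i.e. that its local components at $v \in S_{G'} \cup \{\infty\}$ are discrete series. At $\infty$ this is automatic (it is in $\mathrm{Im}(\psi_G)$); at the split places in $S_{G'}$ one uses that $G'$ is obtained from $G$ by the local condition $G'_p \cong \GL_3/\Q_p$ for $p \in S$, so $S_{G'} = (S_G \setminus S) \cup (\text{places where } G' \text{ but not } G \text{ is ramified among split primes})$, and one must verify directly from the definitions of $S$, $S_G$, $S_{G'}$ that $S_{G'} \subseteq S_G$ — this is where the hypothesis that $G'_p$ is split for all $p \in S$ enters.

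Granting that $\psi_G(\Pi) \in \mathrm{Im}(\psi_{G'})$, the Proposition (applied to $G'$) gives a unique stable $L$-packet $\Pi' \in \Pi_s(G')$ with $\psi_{G'}(\Pi') = \psi_G(\Pi)$. Next I would identify the members of $\Pi'$ with the representation $\pi'$ in the statement: by construction of Rogawski's transfer maps, the local packet of $\psi_{G'}(\Pi')$ at a place $v$ is the local transfer of the local packet of $\Pi'$ at $v$, and likewise for $\psi_G(\Pi)$ and $\Pi$; at places $v \notin S$ the groups $G_v$ and $G'_v$ are isomorphic and the local transfers coincide, while at $v \in S$ the local base-change/transfer from $\GL_3$ and the fact that $G'_v \cong \GL_3/\Q_v$ forces the comparison to be exactly $\mathrm{JL}$ on $G_v$ (matching the Jacquet–Langlands transfer between $D_v^\times$ and $\GL_3(\Q_v)$). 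Hence the representation $\pi' = \bigotimes_{p \in S} \mathrm{JL}(\pi_p) \otimes \bigotimes_{p \notin S} \pi_p$ lies in the global $L$-packet $\Pi'$.

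Finally, for the multiplicity statement I would invoke the Lemma twice: since $\Pi$ is stable, $m(\pi) = 1$ for the given $\pi$, and since $\Pi'$ is stable (all $L$-packets of $G'$ arising here are stable because they lie in $\mathrm{Im}(\psi_{G'})$ and come from $\Pi_s$, or directly because $\psi_{G'}$ restricted to $\Pi_s(G')$ is the relevant bijection), $m(\pi') = 1$ as well, so $\pi$ and $\pi'$ occur with the same multiplicity, namely $1$. I expect the main obstacle to be the bookkeeping in the middle step: carefully matching the abstract local components of Rogawski's transferred packet $\psi_{G'}(\Pi')$ with the explicit tensor product defining $\pi'$, in particular pinning down that the local comparison at $p \in S$ is precisely the Jacquet–Langlands map $\mathrm{JL}$ and not some twist of it, and verifying the set-theoretic inclusion $S_{G'} \subseteq S_G$ that makes $\mathrm{Im}(\psi_G) \subseteq \mathrm{Im}(\psi_{G'})$ so that the Proposition can be applied on the $G'$ side.
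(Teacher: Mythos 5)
Your proposal is correct and follows essentially the same route as the paper's (much terser) proof: deduce $S_{G'}\subset S_G$ from the hypothesis that $G'_p\cong\GL_3/\Q_p$ for $p\in S$, conclude $\mathrm{Im}(\psi_G)\subset\mathrm{Im}(\psi_{G'})$ via the Proposition, hence that $\pi'$ lies in a stable $L$-packet of $G'$, and apply the Lemma to get multiplicity one on both sides. The local bookkeeping you flag as the main obstacle (identifying the abstract local components of $\psi_{G'}^{-1}(\psi_G(\Pi))$ with $\mathrm{JL}(\pi_p)$ at $p\in S$ and $\pi_p$ elsewhere, with no stray twist) is implicit in Rogawski's construction of the transfer maps and is not spelled out in the paper either.
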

\begin{proof}
Our assumptions on $G$ and $G'$ imply that $S_{G'}\subset S_{G}$ and so $Im(\psi_{G})\subset Im(\psi_{G'})$. Therefore, $\pi'$ is an element of a stable $L$-packet. We can therefore apply the previous lemma to conclude that $\pi'$ is automorphic.
\end{proof}

Following \cite{Rog} as in Section 14.6, we can also transfer one-dimensional representations: Let $\A_{E,1}$ denote the subgroup of norm one elements in $\A^*_E$, the ideles of $E$, and $E_1$ the norm one elements in $E^*$. 
Let $N_G:G(\A)\rightarrow \A_{E,1}$ be defined by $N_G(x)=\prod_{v} N_{G,v}(x_v)$, where $N_{G,v}$ is the determinant for $v\notin S_G$ and the reduced norm $\operatorname{Nrd}$ for $v\in S_G$. If $\pi$ is a one-dimensional automorphic representation of $G$, then $\pi=\chi \circ N_G$, where $\chi$ is a character of $\A_{E,1}$ trivial on $E_1$. The transfer of $\pi$ is the one-dimensional representation $\pi'=\chi \circ N_{G'}$ of $G'$ associated to this character. \\

Next we explore some properties of the local Jacquet--Langlands transfer that are needed to establish $p$-adic functoriality.
We make use of the Bernstein decomposition, the notation is as in \cite{bushnell}:
Let $F/\Q_p$ be a finite extension with ring of integers $\mathcal{O}_F$ and uniformizer $\varpi$, $A/F$ a central simple algebra and $A^*$ the group of units, so $A^* \cong \GL_m(D)$ for $D/F$ some division algebra. Let $d\in \mathbb{N}$ be such that $d^2=\operatorname{dim} D$. The reduced norm map will be denoted by $\operatorname{Nrd}:A^*\rightarrow F^*$. Let $O_D$ be a maximal order and $\varpi_D$ a uniformizer in $O_D$. 
The subgroups
$$ K_n:= \{x \in \GL_m(O_D) : x \equiv 1 (\text{ mod }\varpi_D^n) \}$$ 
form a fundamental system of compact open neighbourhoods of the identity.
Let $\mathcal{B}(A^*)$ be the set of Bernstein components of $A^*$. For $s \in \mathcal{B}(A^*)$ let $\mathcal{R}^s(A^*)$ be the category of isomorphism classes of smooth representations of $A^*$ associated to $s$, which we will also call a Bernstein component. Let
$ \Irr \mathcal{R}^s(A^*) \subset \mathcal{R}^s(A^*)$ be the class of objects that are irreducible representations. 

For $K$ a compact open subgroup of $A^*$ we write 
$$e_K:= \mu(K)^{-1}\mathbf{1}_K \in C^\infty_c(A^*,\C),$$
for the idempotent attached to $K$, here $\mathbf{1}_K$ is the characteristic function on $K$ and $\mu$ a Haar measure on $A^*$.

Now let $e \in C^\infty_c(A^*, \C)$ be an arbitrary idempotent. Then there exists a compact open subgroup $K\subset A^*$ such that $e_K\cdot e= e = e \cdot e_K$, where ``$\cdot$" denotes convolution. We choose $K$ to be of the form $K_n$.

We denote by $\Irr \mathcal{R}_e(A^*)$ the class of all irreducible smooth representations $\pi$ such that $e \cdot \pi\neq 0$, i.e., of those irreducible representations that are generated by $e \cdot \pi$. Note that for any such representation $\pi$ the set of $K$-fixed vectors is nonzero as
$$ 0 \neq e \cdot \pi = e \cdot e_K \cdot \pi= e \cdot \pi^K. $$   
It follows from \cite{bernstein} Corollaire 3.9
that there exists a finite set $\mathcal{J}_e\subset \mathcal{B}(A^*)$, such that 
$$ \Irr \mathcal{R}_e (A^*) \subset \bigcup_{s\in \mathcal{J}_e} \Irr \mathcal{R}^s(A^*).$$

\begin{prop}
Let $e \in C^\infty_c(A^*, \C)$ be an idempotent with the property that we can find a set $\mathcal{J}_e$ as above, such that any $s \in \mathcal{J}_e$ is associated to a supercuspidal representation. Then there exists a compact open subgroup $K_e$ of $\GL_{md}(F)$, such that
$$JL(\pi)^{K_e} \neq 0$$
for all $\pi \in \Irr \mathcal{R}_e (A^*)$. \label{K}
\end{prop}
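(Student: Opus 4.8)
The plan is to reduce the statement to the corresponding fact on the side of $A^*$ via the local Jacquet--Langlands correspondence, using that $\mathrm{JL}$ is a bijection on the relevant classes of irreducible representations. First I would unpack what the hypothesis on $e$ buys us: by the discussion preceding the statement, $\Irr\mathcal{R}_e(A^*)\subset\bigcup_{s\in\mathcal{J}_e}\Irr\mathcal{R}^s(A^*)$, and each $s\in\mathcal{J}_e$ is the inertial class of a supercuspidal representation. So every $\pi\in\Irr\mathcal{R}_e(A^*)$ lies in a Bernstein component whose supercuspidal support consists of (a twist of) a single supercuspidal of $A^*\cong\GL_m(D)$; concretely, each such $\pi$ is of the form $\sigma\otimes(\chi\circ\mathrm{Nrd})$ for a fixed supercuspidal $\sigma_s$ (one for each $s\in\mathcal{J}_e$, finitely many) and $\chi$ an unramified character of $F^*$. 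Since $\mathcal{J}_e$ is finite, it suffices to produce, for each $s\in\mathcal{J}_e$, a compact open $K_s\subset\GL_{md}(F)$ with $\mathrm{JL}(\pi)^{K_s}\neq 0$ for all $\pi\in\Irr\mathcal{R}^s(A^*)$ that also lie in $\Irr\mathcal{R}_e(A^*)$; then take $K_e$ to be a compact open subgroup contained in the intersection of the finitely many $K_s$.

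Fix one $s\in\mathcal{J}_e$ with base point $\sigma=\sigma_s$. The local Jacquet--Langlands transfer is compatible with twisting by characters of $F^*$ composed with the reduced norm / determinant, so $\mathrm{JL}(\sigma\otimes(\chi\circ\mathrm{Nrd}))\cong\mathrm{JL}(\sigma)\otimes(\chi\circ\det)$ for all unramified $\chi$. Now $\mathrm{JL}(\sigma)$ is a fixed irreducible (essentially square-integrable, in general a generalized Steinberg — but in any case a single explicit) representation of $\GL_{md}(F)$; pick any compact open $K_s$ with $\mathrm{JL}(\sigma)^{K_s}\neq 0$, e.g. a deep enough principal congruence subgroup. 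Since $\chi$ is unramified, $\chi\circ\det$ is trivial on $\mathrm{SL}_{md}(F)$ and hence on $K_s$ once $K_s$ is chosen inside $\mathrm{SL}_{md}$-plus-level (or simply: twisting by an unramified character does not change the space of vectors fixed by a subgroup of the kernel of $\chi\circ\det$, and every sufficiently small $K_s$ has this property). Therefore $\mathrm{JL}(\pi)^{K_s}\neq 0$ for every $\pi$ in the component $s$. Running over the finitely many $s\in\mathcal{J}_e$ and intersecting gives the desired $K_e$.

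The main point to be careful about — and the step I'd expect to take the most work — is verifying the compatibility of $\mathrm{JL}$ with unramified twists and, relatedly, pinning down that the members of a single Bernstein component $\mathcal{R}^s(A^*)$ with $s$ supercuspidal are \emph{exactly} the unramified-character twists of the base point $\sigma_s$ (this is the structure of a supercuspidal Bernstein component, with its torus of unramified twists acting transitively on irreducibles, up to the finite group coming from self-twists). Both facts are standard — the twist-compatibility follows from the defining character identity of $\mathrm{JL}$ (matching of characters on regular semisimple classes, which is insensitive to the reduced-norm twist), and the component structure is Bernstein's theory — but they are the substantive inputs; granted them, the choice of $K_e$ is just: take $K_e\subseteq\bigcap_{s\in\mathcal{J}_e}K_s$ small enough that the relevant unramified central twists act trivially on the fixed spaces, which is possible since there are only finitely many components in play.
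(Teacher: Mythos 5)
Your proposal matches the paper's proof: both exploit that $\mathcal{J}_e$ is finite and its components are supercuspidal, so every $\pi\in\Irr\mathcal{R}_e(A^*)$ is an unramified twist of a fixed base point $\rho_s$; that $\mathrm{JL}$ commutes with such twists; and that an unramified character composed with $\det$ is trivial on any compact open subgroup of $\GL_{md}(\mathcal{O}_F)$, so one just intersects finitely many $K_s$ to get $K_e$. One small slip: the remark about choosing $K_s$ ``inside $\mathrm{SL}_{md}$-plus-level'' does not quite parse (a compact open subgroup of $\GL_{md}(F)$ cannot lie in $\mathrm{SL}_{md}(F)$), but your parenthetical alternative --- take $K_s\subset\GL_{md}(\mathcal{O}_F)$, on which $\chi\circ\det$ is automatically trivial for $\chi$ unramified --- is exactly the paper's argument.
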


\begin{proof} Let $\pi \in \Irr \mathcal{R}_e (A^*)$. There exists $s_\pi \in \mathcal{J}_e$ such that $\pi \in \Irr \mathcal{R}^{s_\pi}(A^*)$. For $s \in \mathcal{J}_e$ let $\rho_s \in \Irr \mathcal{R}^s(A^*)$ be a representative. Then $\pi$ is an unramified twist of $\rho_{s_\pi}$, say $\pi\cong \rho_{s_\pi}\otimes \chi$. Note that $\chi$ is trivial on $K$ and therefore $\dim(\pi^K)= \dim(\rho_{s_\pi}^K)\neq 0$. So for a general $\pi \in \Irr \mathcal{R}_e (A^*)$ we have  
$$\dim(\pi^K) \in \{\dim(\rho_s^K) : s\in \mathcal{J}_e\}.$$ 
The fact that $\chi$ is unramified implies that for any $K' \subset \GL_{md}(\mathcal{O}_F)$ open compact 
$$\dim(JL(\pi)^{K'})\in \{\dim(JL(\rho_{s_\pi})^{K'}) : s\in \mathcal{J}_e\},$$ 
as the Jacquet--Langlands transfer respects twists by a character. It is therefore sufficient to find a compact open subgroup $K_e \subset \GL_{md}(\mathcal{O}_F)$, such that $JL(\rho_{s_\pi})^{K_e} \neq 0$ for all $s\in \mathcal{J}_e$. Such a $K_e$ exists as $JL(\rho) = \bigcup_K JL(\rho)^K$ and $\mathcal{J}_e$ is finite.
\end{proof}

\begin{rem} Assume that $m=1$, so that $A^*=D^*$. As $D^*$ has no proper Levi subgroups defined over $F$ all irreducible smooth representations are supercuspidal, so we can apply Proposition \ref{K} for any idempotent $e \in C^\infty_c(D^*,\C)$. 
\end{rem}

\begin{lem}
Assume that $m=1$, so that $A^*=D^*$. For any $n \in \mathbb{N}$ there exists $t_n \in \mathbb{N}$, such that the following holds:
If $\pi=\chi \circ \operatorname{Nrd}:D^*\rightarrow F^*$ is a character of $D^*$ which is trivial on $K_n$ then $\pi':=\chi \circ \det: \GL_d(F)\rightarrow F^*$ is trivial on $K'_{t_n}:=\{ g \in \GL_d(\mathcal{O}_F): \det(g) \in 1+\varpi^{t_n}\mathcal{O}_F\}$.
\label{L}
\end{lem}
\begin{proof}
The units in the maximal order $O^*_D$ contain the units $O^*_L$ of an unramified extension $L/F$ of degree $d$, which implies that $\operatorname{Nrd}(O^*_D)=O^*_F$.
The index of $K_n$ in $O^*_D$ is finite, so the image $\operatorname{Nrd}(K_n)$ has finite index in $O^*_F$ and therefore contains a subgroup of the form $1+w^{t} O_F$ for some $t \in \mathbb{N}$. We can take $t_n$ to be any such $t$.
\end{proof}

\section{Eigenvarieties for definite unitary groups}
To set up notation, we will briefly recall the definition and existence theorem of eigenvarieties of idempotent type as constructed in Section 7 of \cite{bellaiche}. Let $E/\Q$ and $G$ be as in Section 2. Let $p$ be a prime that splits in $E$ and such that $G_p\cong\GL_3/\Q_p$. Fix such an isomorphism. Furthermore fix embeddings $\iota_p:\overline{\Q} \hookrightarrow \overline{\Q}_p$ and $\iota_\infty:\overline{\Q} \hookrightarrow \C$. 

Let $S_1$ be a finite set of places of $\Q$ containing the archimedean place, such that $G$ is unramified outside $S_1$. Fix a model $G/\Z_{S_1}$ of $G$. Let $\mu=\prod_l{\mu_l}$ be a product Haar measure on $G(\A_f)$ such that $\mu_l(\Z_l)=1$ for all $l\notin S_1$.

Let $S_0$ be a subset of the primes $l$ that split in $E$ and such that $G_l \cong \GL_3/\Q_l$. Assume furthermore that $p\notin S_0$ and $S_0 \cap S_1=\emptyset$. Define 
$$\mathcal{H}:= \mathcal{A}_p \otimes \mathcal{H}_{\mathrm{ur}},$$
where $\mathcal{H}_{\mathrm{ur}}=C_c(G(\widehat{\mathbb{Z}}_{S_0})\backslash G(\A_{S_0})/G(\widehat{\mathbb{Z}}_{S_0}), \Z)$ \footnote{ If $l$ is a prime which is inert in $E$, the extension $E_v/\Q_l$ is unramified and as $G_l\times \text{Spec}(E_{v}) \cong \GL_3$, the group $G$ is unramified at $l$. In the definition of $\mathcal{H}$ we could have included the (commutative) unramified Hecke algebras at these primes and all constructions could be made using this enlarged Hecke algebra.} and $\mathcal{A}_p$ is the commutative Atkin-Lehner subring of $C^\infty_c(I\backslash \GL_3(\Q_p)/I, \mathbb{Z}\left[\frac{1}{p}\right])$, where $I$ denotes the standard Iwahori subgroup of $\GL_3(\Q_p)$ (see \cite{bellaiche} Section 6.4.1 for the definition of $\mathcal{A}_p$).
For $u=\mathrm{diag}(p^2,p,1)\in \GL_3(\Q_p)$, let $[IuI] \in \mathcal{A}_p$ be the characteristic function on the double coset $IuI$. The element $ [IuI]\otimes 1_{\mathcal{H}_{\mathrm{ur}}} \in \mathcal{H}$ will be denoted by $u_0$.

Let $\mathcal{Z}_0\subset \hom_{\mathrm{ring}}(\mathcal{H},\overline{\Q}_p) \times \Z^3$ be the set of pairs $(\psi_{(\pi,\mathcal{R})},\underline{k})$ where $(\pi,\mathcal{R})$ runs over the set of $p$-refined automorphic representations, $\psi_{(\pi,\mathcal{R})}$ is the system of Hecke eigenvalues attached to $(\pi,\mathcal{R})$ and $\underline{k}$ is the weight of $\pi$ (see \cite{bellaiche} Section 7.2.2 for the definition of a $p$-refined automorphic representation and details regarding $\psi_{(\pi,\mathcal{R})}$). Let $\mathcal{Z}$ be a subset of $\mathcal{Z}_0$.

Weight space will be denoted by $\mathcal{W}$. It is the quasi-separated rigid analytic space over $\Q_p$ 
$$ \mathcal{W}:= \hom_{\mathrm{cont}}((\Z^*_p)^3,\mathbb{G}_m),$$
whose points over any affinoid $\Q_p$-algebra $A$ parameterize the continuous characters $(\Z^*_p)^3\rightarrow A^*$. 
We embed $\Z^3$ into $\mathcal{W}(\Q_p)$ by
$$ (k_1,k_2,k_3) \mapsto \left((t_1,t_2,t_3) \mapsto \prod_{i=1,...,3} t_i^{k_i}\right). $$
An eigenvariety for $\mathcal{Z}$ is defined as follows (for a definition of an accumulation and Zariski-dense subset of a rigid space see \cite{bellaiche} Section 3.3.1). 
\begin{Def} Let $L$ be a finite extension of $\Q_p$. 
An \textit{eigenvariety} $(X,\psi,\nu,Z)$ for $\mathcal{Z}$ over $L$ is a reduced $p$-adic rigid analytic space $X$ over $L$ together with 
\begin{itemize}
	\item a ring homomorphism $\psi: \mathcal{H} \rightarrow \mathcal{O}(X)^{rig} $,
	\item an analytic map $\omega: X\rightarrow \mathcal{W}$ defined over $L$ and
	\item an accumulation and Zariski-dense subset $Z \subset X(\overline{\Q}_p)$,
	\end{itemize}
such that the following conditions are satisfied:
\begin{enumerate}
	\item The map $\nu :=(\omega, \psi(u_0)^{-1}) : X \rightarrow \mathcal{W}\times \mathbb{G}_m$ is finite.
	\item For all open affinoid $V\subset \mathcal{W}\times \mathbb{G}_m$, the natural map 
	$$\psi \otimes \nu^*:\mathcal{H}\otimes_{\mathbb{Z}} \mathcal{O}(V) \rightarrow \mathcal{O}(\nu^{-1}(V)) $$
	is surjective.
	\item The natural evaluation map $X(\overline{\Q}_p)\rightarrow \hom_{\mathrm{ring}}(\mathcal{H},\overline{\Q}_p)$
      $$ x \mapsto \psi_x := (h \mapsto \psi(h)(x)) $$
	induces a bijection $Z \rightarrow \mathcal{Z}, z \mapsto (\psi_z, \omega(z))$.
\end{enumerate}
\end{Def}

Next we introduce the relevant sets $\mathcal{Z}$ that give rise to the eigenvarieties we want to compare via functoriality. 
Fix an idempotent 
$$e \in C_c^{\infty}(G(\A^{p,S_0}_f),\overline{\Q})\otimes 1_{\mathcal{H}_{\mathrm{ur}}} \subset C^{\infty}_c (G(\A^p_f),\overline{\Q})$$
and a finite extension $L$ of $\Q_p$, which contains the values of $i_p(e) := i_p\circ e$.
Let $\mathcal{Z}_e \subset \mathcal{Z}_0$ be the subset of $(\psi_{(\pi,\mathcal{R})},\underline{k})$, such that $e\cdot\pi_f^p\neq 0$.

Recall that a rigid space $X$ over $L$ is called \textit{nested} if it has an admissible covering by open affinoids $\{X_i,i\geq0\}$ such that $X_i \subset X_{i+1}$ and the natural $L$-linear map $\mathcal{O}(X_{i+1})\rightarrow \mathcal{O}(X_i)$ is compact. 
\begin{thm}
There exists a unique eigenvariety $(X/L,\psi,\nu,Z)$ for $\mathcal{Z}_e$. It is nested and equidimensional of dimension 3.
\end{thm}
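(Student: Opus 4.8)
The plan is to apply the eigenvariety machine of \cite{bellaiche}, section 7 (the ``idempotent type'' version), to the datum consisting of $G$, the tame idempotent $e$, and the Iwahori level at $p$. First I would recall the interpolating module: over an admissible affinoid cover $\{W_i\}$ of $\mathcal{W}$ one forms, for the universal character $\kappa_i$ on $W_i$, the space
$$ \mathcal{M}_i := \{ \phi : G(\Q)\backslash G(\A_f) \to \mathcal{C}(\kappa_i) \ : \ \phi(gk) = \kappa_i(k)^{-1}\phi(g)\ \text{ for } k \in I, \ e\cdot \phi = \phi \}, $$
where $\mathcal{C}(\kappa_i)$ is the usual space of locally analytic functions on the Iwahori pro-$p$ radical on which $\kappa_i$ is induced. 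Because $G$ is definite, $G(\Q)\backslash G(\A_f)$ is profinite, so $\mathcal{M}_i$ is an orthonormalizable $\mathcal{O}(W_i)$-Banach module (satisfying Buzzard's property (Pr)), and the $\mathcal{M}_i$ glue along the cover into a sheaf $\mathcal{M}$ on $\mathcal{W}$. The Hecke algebra $\mathcal{H}$ acts $\mathcal{O}(\mathcal{W})$-linearly, and the operator $u_0 = [IuI]\otimes 1$ acts compactly since $u = \mathrm{diag}(p^2,p,1)$ contracts $I$; this is exactly the input required by the machine.

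Next I would run the machine: form the Fredholm determinant $F = \det(1 - T\, u_0 \mid \mathcal{M}) \in \mathcal{O}(\mathcal{W})\{\{T\}\}$, let $\mathcal{Z}_F \subset \mathcal{W}\times \mathbb{G}_m$ be the associated spectral variety, and over the admissible cover of $\mathcal{Z}_F$ by slope-adapted affinoids use the slope decomposition of $\mathcal{M}$ to build the coherent sheaf of finite-slope Hecke algebras; taking relative $\mathrm{Spec}$ and passing to the reduced structure produces $X$, together with $\psi : \mathcal{H} \to \mathcal{O}(X)^{rig}$, $\omega : X \to \mathcal{W}$, and the finite map $\nu = (\omega, \psi(u_0)^{-1})$. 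Properties (1) and (2) of the definition of an eigenvariety are built into this construction; that $X$ is nested is immediate from the nested structure of $\mathcal{Z}_F$; and $X$ is equidimensional of dimension $\dim \mathcal{W} = 3$ because $\mathcal{W}$ is smooth equidimensional of dimension $3$ (here $n = 3$ for $U_3^*$), $\mathcal{Z}_F$ is a Fredholm hypersurface over $\mathcal{W}$ hence equidimensional of dimension $3$, and every irreducible component of $X$ maps onto one of $\mathcal{Z}_F$ under the finite map $\nu$.

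It remains to choose $Z$ and verify (3). I would take $Z$ to be the set of classical points: at a dominant algebraic weight $\underline{k}$ the classicality criterion of \cite{bellaiche} (overconvergent forms of weight $\underline{k}$ whose $u_0$-slope is small enough relative to $\underline{k}$ are classical) identifies the small-slope systems of Hecke eigenvalues on $\mathcal{M}$ over $\underline{k}$ with those attached to the $p$-refined classical automorphic representations $(\pi,\mathcal{R})$ of $G$ with $e\cdot\pi^p \neq 0$, i.e. with the points of $\mathcal{Z}_e$ over $\underline{k}$; this yields a bijection $Z \to \mathcal{Z}_e$ compatible with $\psi$ and $\omega$. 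Zariski-density and the accumulation property of $Z$ then follow because the dominant algebraic weights satisfying a prescribed congruence are Zariski-dense and accumulation in $\mathcal{W}$ and at every such weight the small-slope classical points accumulate inside $\nu^{-1}$ of a neighbourhood; this density input is the one genuinely substantive point, and for our groups it is supplied verbatim by \cite{bellaiche}.

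Finally, uniqueness is formal: if $(X', \psi', \nu', Z')$ is another eigenvariety for $\mathcal{Z}_e$, then $Z,Z'$ are Zariski-dense, both map bijectively onto the same subset $\mathcal{Z}_e \subset \hom_{\mathrm{ring}}(\mathcal{H}, \overline{\Q}_p)\times \Z^n$ compatibly with $\mathcal{H}$ and with $\omega$, both spaces are reduced, and $\nu,\nu'$ are finite; by the general uniqueness statement for eigenvarieties (\cite{bellaiche}; compare Chenevier's interpolation argument) these data determine a canonical isomorphism $X \cong X'$, and I would simply invoke it. The main obstacle in the whole argument is not the abstract machine but checking that an \emph{arbitrary} idempotent $e$ at the ramified places — rather than $e_K$ for a single compact open $K$ — still yields an ON-able module with a good Fredholm theory and a finite-slope decomposition; this is exactly the content of the idempotent-type construction, and once it is granted the rest is bookkeeping together with citation of \cite{bellaiche}.
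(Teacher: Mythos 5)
Your proposal is correct and is precisely a sketch of the construction in \cite{bellaiche} (sections 7.2--7.3): the interpolating Banach module of $p$-adic automorphic forms for the definite group, compactness of $u_0$, the Fredholm/spectral variety machine, classicality, accumulation of classical points, and the interpolation uniqueness argument. The paper's own proof is just a citation of Theorem 7.3.1 and Proposition 7.2.8 of \cite{bellaiche}, so you have taken the same route, merely expanded.
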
 
Proof. This is Theorem 7.3.1 in \cite{bellaiche}. The uniqueness assertion is proved in their Proposition 7.2.8. 

\section{\textit{p}-adic transfer of automorphic forms}
Let $G$ and $G'$ be unitary groups as in Section $2$. Fix models $G/\Z_{S_1}$ and $G'/\Z_{S_1}$ where we've chosen $S_1$, such that both groups are unramified outside $S_1$. Just as above let $S_0$ be a subset of the primes $l$ that split in $E$ and such that $G_l \cong \GL_3/\Q_l$, so in particular $S_{G}\cap S_0 = \emptyset$, where $S_G$ is as in Section 2. Note that our assumptions on $G'$ imply that for $l\in S_0$ we also have $G'_l \cong \GL_3/\Q_l$. Let $p \notin S_G$ be a prime that splits in $E$. Assume furthermore that $p\notin S_0$ and that $S_0\cap S_1=\emptyset$. 
Fix an idempotent
$$e \in C_c^{\infty}(G(\A^{p,S_0}_f),\overline{\Q})\otimes 1_{\mathcal{H}_{\mathrm{ur}}} \subset C^{\infty}_c (G(\A^p_f),\overline{\Q})$$ 
and assume 
\begin{equation}
e=\prod_{l\notin {S_0}\cup \{p\}} e_l \times \prod_{l\in S_0} e_{G(\Z_l)}
\label{eq:2}
\end{equation}
is a product of local idempotents $e_l \in C_c^{\infty}(G(\Q_l),\overline{\Q})$. 

As in Section 2, we let $S$ be the set of places where the local groups are not isomorphic. For $l\in S$ choose a compact open subgroup $K_l\subset G'(\Q_l)$ such that 
$(\pi'_l)^{K_l}\neq0$ 
for all local components $\pi'_l$ of automorphic representations $\pi'$ of $G'$ that are the transfer of an automorphic representation $\pi$ of $G$ satisfying $e\cdot \pi_f^p \neq0$. The existence of such compact open subgroups is guaranteed by Proposition \ref{K} and Lemma \ref{L}. Define the idempotent
$$e':= \prod_{l \in S} e_{K_l} \times \prod_{l \notin S\cup S_0\cup \{p\}} e_l \times \prod_{l\in S_0} e_{G'(\Z_l)}.$$
Note that for any $p$-refined automorphic representation $(\pi,\mathcal{R})$ of $G$
$$ \psi_{(\pi,\mathcal{R})}= \psi_{(\pi',\mathcal{R})},$$ 
where $\pi'$ is the transfer of $\pi$ from Theorem \ref{transfer}. We have established an inclusion
\begin{equation}
\mathcal{Z}_e \subset \mathcal{Z}_{e'}.
\label{eq:1}
\end{equation}

\begin{lem}
Let $\mathcal{Z}_1 \subset \mathcal{Z}_2$ be two subsets of $\mathcal{Z}_0 \subset \hom_{\mathrm{ring}}(\mathcal{H},\overline{\Q}_p)\times \mathbb{Z}^3$. Assume there exist eigenvarieties $(X_1/L,\psi_1,\nu_1,Z_1)$ for $\mathcal{Z}_1$ and $ (X_2/L,\psi_2,\nu_2,Z_2)$ for $\mathcal{Z}_2$. Then there exists a unique closed immersion 
$$\zeta : X_1 \hookrightarrow X_2$$ 
over $L$ such that $\nu_1=\zeta \circ \nu_2$ and for all $h \in \mathcal{H}, \ \psi_1(h)= \zeta^* (\psi_2(h))$.
\end{lem}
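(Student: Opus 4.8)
The plan is to construct $\zeta$ locally on the nested affinoid covers of $X_1$ and $X_2$ and then glue, exploiting the finiteness of $\nu_1,\nu_2$ together with the surjectivity property (2) in the definition of an eigenvariety. First I would recall that both $X_1$ and $X_2$ are reduced, are finite over $\mathcal{W}\times\mathbb{G}_m$ via $\nu_1,\nu_2$, and that the interpolation data $(\psi_i,Z_i)$ determine them: this is exactly the uniqueness statement for eigenvarieties (Proposition 7.2.8 of \cite{bellaiche}), which I will use as a black box. The key point is that property (2) says $\mathcal{O}(\nu_i^{-1}(V))$ is generated as an $\mathcal{O}(V)$-algebra by the image of $\mathcal{H}$, so a morphism out of $X_i$ over $\mathcal{W}\times\mathbb{G}_m$ is \emph{uniquely} determined by what it does to Hecke operators, and such a morphism exists as soon as the induced map on Hecke eigenvalues is compatible.

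Concretely, I would fix an admissible affinoid $V\subset\mathcal{W}\times\mathbb{G}_m$ and set $X_{i,V}:=\nu_i^{-1}(V)$, an affinoid because $\nu_i$ is finite. The ring $\mathcal{O}(X_{1,V})$ is generated over $\mathcal{O}(V)$ by $\psi_1(\mathcal{H})$; I want to define an $\mathcal{O}(V)$-algebra map $\mathcal{O}(X_{2,V})\to\mathcal{O}(X_{1,V})$ sending $\psi_2(h)\mapsto\psi_1(h)$. To see this is well defined I would argue that the kernel of $\mathcal{O}(V)[\mathcal{H}]\twoheadrightarrow\mathcal{O}(X_{2,V})$ is contained in the kernel of $\mathcal{O}(V)[\mathcal{H}]\to\mathcal{O}(X_{1,V})$. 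Since $X_{1,V}$ is reduced, it suffices to check this after evaluation at every point $x\in Z_1$, which is Zariski-dense in $X_{1,V}$ (using the accumulation/Zariski-density of $Z_1$ and finiteness of $\nu_1$ to see $Z_1\cap X_{1,V}$ is still Zariski-dense in $X_{1,V}$); at such a point both sides compute the same element of $\hom_{\mathrm{ring}}(\mathcal{H},\overline{\Q}_p)\times\Z^n$, namely the one indexed by the corresponding element of $\mathcal{Z}_1\subset\mathcal{Z}_2$, because property (3) identifies $Z_i$ with $\mathcal{Z}_i$ compatibly with the inclusion $\mathcal{Z}_1\subset\mathcal{Z}_2$. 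This yields a map $\zeta_V:X_{1,V}\to X_{2,V}$ over $V$ with $\zeta_V^*\psi_2=\psi_1$ and $\nu_1=\zeta_V\circ\nu_2$ on $X_{1,V}$, i.e.\ $\nu_1 = \nu_2\circ\zeta_V$; it is a closed immersion because the comparison of Hecke images shows $\mathcal{O}(X_{2,V})\to\mathcal{O}(X_{1,V})$ is surjective (the image contains $\mathcal{O}(V)$ and all $\psi_1(h)$, which generate), and surjectivity of the pullback on a finite map of affinoids gives a closed immersion. Uniqueness of $\zeta_V$ is immediate since any map over $V$ compatible with $\psi_1,\psi_2$ must send $\psi_2(h)$ to $\psi_1(h)$, and these generate.

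To glue, I would note the $\zeta_V$ are unique, hence automatically agree on overlaps $X_{1,V}\cap X_{1,V'}=\nu_1^{-1}(V\cap V')$ after covering $V\cap V'$ by affinoids, so they patch to a global closed immersion $\zeta:X_1\hookrightarrow X_2$ over $L$ with $\psi_1=\zeta^*\psi_2$ and $\nu_1=\nu_2\circ\zeta$. Global uniqueness follows from local uniqueness. I expect the main obstacle to be the bookkeeping around Zariski-density: one must check that $Z_1$ remains Zariski-dense in each affinoid $X_{1,V}$ so that the reducedness argument applies, and that the evaluation-at-$Z_1$ argument correctly uses property (3) for \emph{both} eigenvarieties together with the inclusion $\mathcal{Z}_1\subset\mathcal{Z}_2$ — everything else is the standard "an eigenvariety is determined by its classical points and Hecke action" formalism. (This lemma is in fact Proposition 7.2.8-type reasoning in \cite{bellaiche}; I would cite it there for the technical density input if a self-contained argument gets unwieldy.)
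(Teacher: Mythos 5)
Your proposal takes essentially the same route as the paper: the paper simply observes that the construction in the proof of Bella\"iche--Chenevier's uniqueness statement (Proposition 7.2.8 of \cite{bellaiche}) produces the desired closed immersion, and you have unpacked that construction into its constituent steps (local comparison over affinoids in $\mathcal{W}\times\mathbb{G}_m$ using property (2), reducedness, Zariski-density of $Z_1$ and property (3), then gluing). The one delicate point you rightly flag --- that $Z_1\cap\nu_1^{-1}(V)$ need not be Zariski-dense in $\nu_1^{-1}(V)$ for an arbitrary affinoid $V$, so one must either restrict to a suitable nested admissible cover or otherwise invoke the accumulation property more carefully --- is precisely the technical content that the paper delegates to \cite{bellaiche}, so your deference there is consistent with the paper's own treatment; note also that you correctly write $\nu_1=\nu_2\circ\zeta$ where the paper's statement has the typo $\nu_1=\zeta\circ\nu_2$.
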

\begin{proof}
By Lemma 7.2.7 (b) of \cite{bellaiche} and property 3 of an eigenvariety there exists a unique injection $\zeta: Z_1 \hookrightarrow Z_2$, such that for all $z \in Z_1, \psi_{1_z} = \psi_{2_{\zeta(z)}} $ and $\omega_1(z)=\omega_2(\zeta(z))$. Our claim is that we can extend $\zeta$ to a closed immersion $\zeta:X_1 \hookrightarrow X_2$.
Using the fact that the $X_i$ are reduced and Lemma 7.2.7 (b) of \cite{bellaiche} again we see that such an extension if it exists is unique. 
The proof of the existence is contained verbatim in the proof of the uniqueness of eigenvarieties, that is in the proof of Proposition 7.2.8 of \cite{bellaiche}. More precisely they construct a closed immersion $X_1 \hookrightarrow X_2$. The same construction applies in our situation and proves the Lemma.
\end{proof}

\begin{thm}
Let $G$, $G'$, $S_0$ ,$e$ and $e'$ be as above. Let $L/\Q_p$ be a finite extension that contains the values of $i_p(e)$ and of $i_p(e')$. \\
Let $(X/L,\psi,\nu,Z_e)$ (resp. $(X'/L,\psi',\nu',Z{_e'})$) be the eigenvariety for $\mathcal{Z}_e$ (resp. $\mathcal{Z}_{e'}$).
Then there exists a unique closed immersion of eigenvarieties 
$$\zeta: X \hookrightarrow X' $$
compatible with the inclusion \ref{eq:1}, i.e., with the classical transfer of automorphic representations.
The image of $\zeta$ is a union of irreducible components. 
\label{main}
\end{thm}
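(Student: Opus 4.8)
The existence and uniqueness of $\zeta$ are essentially formal consequences of the inclusion \ref{eq:1} together with the preceding Lemma, so most of the work has in fact already been carried out upstream. Concretely, I would first record that the \emph{same} Hecke algebra $\mathcal{H}=\mathcal{A}_p\otimes\mathcal{H}_{\mathrm{ur}}$ acts on both sides: it only involves the prime $p$ and the primes in $S_0$, and at every such prime $G$ and $G'$ are isomorphic to $\GL_3/\Q_l$, so $\mathcal{H}$ and the weight space $\mathcal{W}$ are literally the same objects for $G$ and for $G'$, and $\mathcal{Z}_e$, $\mathcal{Z}_{e'}$ are subsets of one and the same $\hom_{\mathrm{ring}}(\mathcal{H},\overline{\Q}_p)\times\Z^3$. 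The eigenvarieties $X$ and $X'$ are provided by hypothesis (their existence also following from the theorem recalled in Section 3 applied to $(G,e)$ and to $(G',e')$ respectively, once one checks that $e'$ is again an idempotent of the required shape, which holds because $S\cap(S_0\cup\{p\})=\emptyset$ and hence $\prod_{l\in S}e_{K_l}$ lives in $C_c^{\infty}(G'(\A_f^{p,S_0}),\overline{\Q})$). Now, by \ref{eq:1}, $\mathcal{Z}_e\subset\mathcal{Z}_{e'}$ as subsets of $\hom_{\mathrm{ring}}(\mathcal{H},\overline{\Q}_p)\times\Z^3$, so the preceding Lemma --- whose proof uses only the abstract eigenvariety axioms and the inclusion, not that the two sets arise from the same group --- yields a unique closed immersion $\zeta\colon X\hookrightarrow X'$ over $L$ with $\nu=\nu'\circ\zeta$ and $\psi(h)=\zeta^*\bigl(\psi'(h)\bigr)$ for all $h\in\mathcal{H}$. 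On classical points $\zeta$ induces, via property $(3)$ of an eigenvariety, the inclusion $\mathcal{Z}_e\hookrightarrow\mathcal{Z}_{e'}$; since by construction this inclusion sends the point attached to a $p$-refined automorphic representation $(\pi,\mathcal{R})$ of $G$ to the point attached to $(\pi',\mathcal{R})$, with $\pi'$ the transfer of $\pi$ (Theorem \ref{transfer}, together with Proposition \ref{K} and Lemma \ref{L} for the fact that $e'\cdot(\pi')^p\ne 0$), the morphism $\zeta$ is compatible with the classical transfer, as claimed.

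It remains to prove that $\zeta(X)$ is a union of irreducible components of $X'$. By the existence theorem recalled in Section 3, both $X$ and $X'$ are reduced and equidimensional of dimension $3$. Since $\zeta$ is a closed immersion, $\zeta(X)$ is a reduced closed analytic subset of $X'$ isomorphic to $X$; it is therefore equidimensional of dimension $3$, and its irreducible components are exactly the images under $\zeta$ of the irreducible components of $X$. Let $Y\subset X'$ be such a component. Then $Y$ is an irreducible closed analytic subset of $X'$ of dimension $3$, hence is contained in some irreducible component $X'_i$ of $X'$, which also has dimension $3$. Because a proper closed analytic subset of an irreducible rigid analytic space has strictly smaller dimension, the inclusion $Y\subseteq X'_i$ of two $3$-dimensional irreducible closed subsets forces $Y=X'_i$. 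Thus every irreducible component of $\zeta(X)$ is an irreducible component of $X'$, so $\zeta(X)$ is the union of those irreducible components of $X'$ that it meets, which proves the claim.

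The point to emphasise is that, within the scope of this theorem, there is no serious obstacle: the genuinely hard inputs lie elsewhere --- the classical transfer of Theorem \ref{transfer} (itself resting on Rogawski's theory and the multiplicity Lemma of Section 2), the choice of a good tame level on $G'$ via Proposition \ref{K} and Lemma \ref{L}, and the extension of Bella\"iche--Chenevier's uniqueness argument packaged in the preceding Lemma. The only things requiring care here are bookkeeping ones: checking that $e'$ is an admissible idempotent of the form to which the existence theorem applies, that $\mathcal{H}$ and $\mathcal{W}$ are common to $G$ and $G'$ so that \ref{eq:1} is literally an inclusion of subsets of $\hom_{\mathrm{ring}}(\mathcal{H},\overline{\Q}_p)\times\Z^3$, and that the dimension-theoretic fact about proper closed analytic subsets is applied correctly (for which one may invoke the standard theory of irreducible components of rigid analytic spaces).
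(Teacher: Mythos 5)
Your proof is correct and follows essentially the same route as the paper: existence, uniqueness and compatibility of $\zeta$ are obtained from the preceding Lemma applied to the inclusion $\mathcal{Z}_e \subset \mathcal{Z}_{e'}$, and the statement about irreducible components is deduced from the equidimensionality (in dimension $3$) of $X$ and $X'$ together with the fact that $\zeta$ is a closed immersion. The only cosmetic difference is that the paper cites Conrad, Corollary~2.2.7, for the final dimension-theoretic step, whereas you unwind that corollary by hand via the standard fact that a proper closed analytic subset of an irreducible rigid space has strictly smaller dimension.
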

\begin{proof}
Everything except the last assertion follows from the previous lemma. The last assertion follows from \cite{conrad} Corollary 2.2.7, which states that for a rigid space $X$, which is equidimensional of dimension $m$, the analytic subsets in $X$, which are equidimensional of dimension $m$, are exactly the union of irreducible components.  
\end{proof}

\end{document}